\newcommand{\be}{\begin{equation}}
\newcommand{\ee}{\end{equation}}
\newtheorem{theorem}{Theorem}[section]
\newtheorem{proposition}[theorem]{Proposition}
\theoremstyle{definition}
\newtheorem{definition}[theorem]{Definition}
\theoremstyle{remark}
\newtheorem{remark}[theorem]{Remark}
\numberwithin{equation}{section}
\begin{document}

\title{Wedge modules for two-parameter quantum groups} % $U_{r,s}(sl_n)$}
\author{Naihuan Jing, Lili Zhang, Ming Liu*}
\address{Department of Mathematics, North Carolina State University, Raleigh, NC 27695, USA}
\email{jing@math.ncsu.edu}
\address{School of Sciences, South China University of Technology,
Guangzhou 510640, China}
\email{lilizhang0902@163.com}
\address{Chern Institute of Mathematics, Nankai University, Tianjin 300071, China}
\email{ming.l1984@gmail.com}

\thanks{{\scriptsize
\hskip -0.4 true cm MSC (2010): Primary: 17B30; Secondary: 17B68.
\newline Keywords: two-parameter quantum groups, Yang-Baxterization, (r,s)-wedge modules.\\
%Received: October 2012, Accepted: 18 January 2013.\\
$*$Corresponding author.
}}

\maketitle

\begin{abstract}
The Yang-Baxterization $R(z)$ of the trigonometric R-matrix is computed for the two-parameter
quantum affine algebra $U_{r,s}(\widehat{sl}_n)$. Using the fusion rule we construct
all fundamental representations of the quantum algebra $U_{r,s}({sl}_n)$ as $(r, s)$-wedge products of
the natural representation.
\end{abstract}

\section{Introduction}

Two-parameter general linear and special linear quantum groups were introduced by Takeuchi \cite{T} in 1990.
This new type of quantum groups was used in \cite{DPW} to incorporate two seemingly different types of
quantum general linear groups:
the usual quantum general linear groups from Drinfeld-Jimbo quantum algebras \cite{D, Jb} and Dipper-Donkin quantum linear
groups \cite{DD} connected with quantum $q$-Schur algebras.
Similar two-parametric quantum groups originating from exotic solutions of Yang-Baxter equations
were also studied in \cite{J} and they interpolate the usual quantum groups and
the degenerate cases from a special solution of the six vertex model. Earlier in \cite{R} Reshetikhin studied quasi-triangular Hopf algebras
from multi-parameter solutions of Yang-Baxter equations.
All these quantum groups can be viewed as quantum transformation groups \cite{J, Do} over
certain quantum planes in the sense of Manin \cite{M}.

In 2001 Benkart and Witherspoon \cite{BW1} investigated the two-parameter quantum group
in connection with the down-up algebras.
In \cite{BW2} Benkart and Witherspoon developed the two-parameter quantum groups corresponding to general linear
and special linear Lie algebras $\mathfrak{gl}_n$ and $\mathfrak{sl}_n$, and constructed the
 corresponding $R$-matrix and the quantum Casimir element. They further showed that these two algebras
can be realized as
Drinfeld doubles. In \cite{BW3} the representation theory
of two-parameter quantum groups $U_{r,s}(\mathfrak{gl}_n)$ and $U_{r,s}(\mathfrak{sl}_n)$ was studied and an explicit description
of the (r,s)-symmetric tensor space $S^2_{r,s}(V)$ and the R-matrix $R=R_{VV}$ was given.

It is well known that the fundamental representations play an important role in the representation theory of classical Lie algebras.
In the case of $\mathfrak{sl}_n$, the fundamental representations are just the wedge modules of the natural representation. The fundamental representations of the quantum group $U_q(\mathfrak{sl}_n)$
were first constructed by Rosso in \cite{Rosso}. Later the fundamental modules
were reconstructed using the fusion procedure \cite{KRS, KMN, JMO}
in connection with the quantum affine algebras $U_q(\widehat{\mathfrak{sl}}_n)$.
A natural question of the wedge modules arises for the two-parameter quantum groups. Although the tensor
modules of $U_{r,s}(\widehat{\mathfrak{sl}}_n)$ have been constructed in \cite{BW3} it is non-trivial to
pass down to the irreducible quotient modules, and the best method seems to be the fusion procedure.
In order to carry out the fusion procedure one first needs to find the $R$-matrix with spectral parameter
as in the one-parameter case.

In this paper we first use the Yang-Baxterization method of Ge-Wu-Xue \cite{GWX} to construct
 a spectral parameter dependent $R$-matrix for the two-parameter quantum algebra $U_{r,s}(\widehat{\mathfrak{sl}}_n)$
based on the braid group representation given by the Benkart-Witherspoon R-matrix.
We remark that our R-matrix $R(z)$ can also be viewed
as the R-matrix of two-parameter quantum affine algebra $U_{r,s}(\widehat{\mathfrak{sl}}_n)$ \cite{HRZ}.
Then we construct all the (r,s)-wedge modules of $U_{r,s}(\mathfrak{sl}_n)$ by
the fusion procedure.

The paper is organized as follows. In section 2, we give a brief introduction of two-parameter quantum groups $U_{r,s}(\mathfrak{gl}_n)$
and $U_{r,s}(\mathfrak{sl}_n)$ and recall the results given in \cite{BW3}. In section 3, we obtain an
R-matrix with spectral parameter which can be regarded as the R-matrix corresponding to the
two-parameter quantum affine algebra $U_{r,s}(\widehat{\mathfrak{sl}}_n)$ by using the Yang-Baxterization
method of Ge, Wu and Xue.
In section 4 we determine all $(r,s)$-wedge modules of $U_{r,s}(\mathfrak{sl}_n)$.

\section{Two-parameter quantum group $U_{r,s}(\mathfrak{sl}_n)$ and R-matrix}

We first recall the definition of the two-parameter quantum group $U_{r,s}(\mathfrak{sl}_n)$ and some
basics about their representations from \cite{BW3}. Let $\epsilon_1,\epsilon_2,...,\epsilon_n$
denote an orthonormal basis
of a Euclidean space $E$ with an inner product $\langle \, ,\, \rangle$. Let $\Pi=\{\alpha_j=\epsilon_j-\epsilon_{j+1}|j=1,2,...,n-1\}$ be the set of the simple roots of type $A_{n-1}$,
then $\Phi=\{\epsilon_i-\epsilon_j|1\leq i\neq j\leq n\}$ is the set of all roots.

We now fix two nonzero elements $r,s\in \mathbb{C}$ with $r\neq s$.

\begin{definition}
The two-parameter quantized enveloping algebra $U_{r,s}(\mathfrak{sl}_n)$ is the unital
associative algebra over $\mathbb{C}$ generated by $e_i,f_i,\omega_i,\omega_i'$, $1\leq i<n$ with
the following relations:

(R1) The generators $\omega_i$, $\omega_i'$ are invertible elements commuting with each other,

(R2)
$\omega_ie_j=r^{\langle \epsilon_i,\alpha_j\rangle}s^{\langle \epsilon_{i+1},\alpha_j\rangle}e_j\omega_i$
and
$\omega_if_j=r^{-\langle \epsilon_i,\alpha_j\rangle}s^{-\langle \epsilon_{i+1},\alpha_j\rangle}f_j\omega_i,$

(R3) $\omega_i'e_j=r^{\langle \epsilon_{i+1},\alpha_j\rangle}s^{\langle \epsilon_{i},\alpha_j\rangle}e_j\omega_i'$
and
$
\omega_i'f_j=r^{-\langle \epsilon_{i+1},\alpha_j\rangle}s^{-\langle \epsilon_{i},\alpha_j\rangle}f_j\omega_i',$

(R4) $[e_i,f_j]=\frac{\delta_{ij}}{r-s}(\omega_i-\omega_i')$,

(R5) $[e_i,e_j]=[f_i,f_j]=0, \mbox{if}\  |i-j|>1,$

(R6) $e_{i}^2e_{i+1}-(r+s)e_ie_{i+1}e_i+rse_{i+1}e_{i}^2=0$
and

$e_{i+1}^2e_{i}-(r+s)e_{i+1}e_{i}e_{i+1}+rse_{i+1}^2e_{i}=0$,

(R7) $f_{i}^2f_{i+1}-(r^{-1}+s^{-1})f_if_{i+1}f_i+r^{-1}s^{-1}f_{i+1}f_{i}^2=0$ and

$f_{i+1}^2f_{i}-(r^{-1}+s^{-1})f_{i+1}f_{i}f_{i+1}+r^{-1}s^{-1}f_{i+1}^2f_{i}=0$,
where $[\, ,\, ]$ is the usual commutator.

\end{definition}
\begin{remark}
When $r=q$, $s=q^{-1}$, the algebra modulo the ideal generated by the elements $\omega_j^{-1}-\omega_j'$, $1\leq j<n$,
is isomorphic to $U_q(\mathfrak{sl}_n)$.
\end{remark}
The algebra $U_{r,s}(\mathfrak{sl}_n)$ is a Hopf algebra under the coproduct $\Delta$ such that
$\omega_i$, $\omega_i'$ are group-like elements and other nontrivial coproducts, counits and antipodes
are given by:

$$\Delta(e_i)=e_i\otimes 1+\omega_i\otimes e_i, \Delta(f_i)=1\otimes f_i+f_i\otimes \omega_i',$$

$$\epsilon(e_i)=\epsilon(f_i)=0, S(e_i)=-\omega_i^{-1}e_i, S(f_i)=-f_i\omega_i'^{-1}.$$

The natural representation of $U=U_{r,s}(\mathfrak{sl}_n)$ can be easily described as follows.
Let $\Lambda=\mathbb{Z}\epsilon_1\oplus \mathbb{Z}\epsilon_2\oplus\cdots\oplus \mathbb{Z}\epsilon_n$ be the
weight lattice of $\mathfrak{gl}_n$, where $\epsilon_i$ are the orthonomal vectors
 as before, $Q=\mathbb{Z}\Phi$ the root lattice, and we denote
$Q_+=\sum_{i=1}^{n-1}\mathbb{Z}_{\geq 0}\alpha_i$. Assume $\Lambda$ is equipped with
a partial order in which $\nu\leq \lambda$ if and only if $\lambda-\nu \in Q_+$. For each $i<n$ define the fundamental weights
\begin{equation*}
\overline{\omega}_i=\epsilon_1+\cdots+\epsilon_i.
%\overline{\omega}_i=\epsilon_1+\cdots+\epsilon_i-\frac in\sum_{j=1}^n\epsilon_j.
\end{equation*}
Then $\Lambda_{\mathfrak{sl}}=\mathbb{Z}\overline{\omega}_1\oplus\cdots\oplus\mathbb{Z}\overline{\omega}_{n-1}$
is the weight lattice of $\mathfrak{sl}_n$.

Corresponding to $\lambda\in \Lambda_{\mathfrak{sl}}$, there is an algebra homomorphism $\widehat{\lambda}$ from the
subalgebra $U^0$ of $U_{r,s}(\mathfrak{sl}_n)$ generated by $\omega_i^{\pm 1}$, $\omega_i'^{\pm 1}$ to
$\mathbb{C}$ given by:
\begin{equation}
\widehat{\lambda}(\omega_i)=r^{\langle\epsilon_i,\lambda\rangle}s^{\langle\epsilon_{i+1},\lambda\rangle}, ~~~~~~~~~~~~~~~~~~~~~\qquad ~~~~~~~~~~~~~~~~~~~~~~\widehat{\lambda}(\omega_i')=r^{\langle\epsilon_{i+1},\lambda\rangle}s^{\langle\epsilon_i,\lambda\rangle}.
\end{equation}
When $rs^{-1}$ is not a root of unity, the homomorphisms $\widehat{\lambda}=\widehat{\mu}$ if and only if the corresponding weights $\lambda=\mu$ \cite{BW2}.
Let $M$ be a finite dimensional $U_{r,s}$-module, then
\begin{equation}
M=\bigoplus_{\chi} M_{\chi},
\end{equation}
where $\chi$ are algebra homomorphisms $\chi: U^0\mapsto \mathbb C^{\times}$ and
$M_{\chi}=\{v\in M| (\omega_i-\chi(\omega_i))^mv=(\omega_i'-\chi(\omega_i'))^mv=0, \, \mbox{for all $i$ and for some $m$}\}$ are the associated generalized eigenspaces. For brevity the homomorphisms $\chi$ are called {\it generalized weights} of $M$. When all generalized weights are of the form $\chi(\widehat{-\alpha})$ for a fixed $\chi$ and $\alpha$ varying in $Q_+$, we say $M$ is a {\it highest weight module} of weight $\chi$ and write
$M=M(\chi)$. Benkart and Witherspoon \cite{BW2, BW3} have shown that when $M$ is simple, all generalized weight subspaces are actually weight subspaces. Moreover,
if all generalized weights %spaces $M_{\chi}$
are the homomorphisms $\widehat{\lambda}$ coming from the usual weights $\lambda$, we will simplify the notation
and write $M_{\lambda}$ for $M_{\widehat{\lambda}}$, and similarly the highest weight module $M(\widehat{\lambda})$ will be simply denoted as $M(\lambda)$.

The theory of highest weight modules of $U_{r,s}(\mathfrak g)$ is quite similar to that of corresponding simple Lie algebra $\mathfrak g$. Besides the highest weight modules discussed above, one can also define the notion of
Verma modules $M(\lambda)$ \cite{BW3}. Benkart and Witherspoon \cite{BW3} have proved that all finite dimensional $U_{r,s}(\mathfrak{sl}_n)$-modules are realized as the simple quotients of some Verma modules. We will
denote by $V(\lambda)$ the simple quotient of the Verma module $M(\lambda)$.

Let $V$ be the $n$-dimensional vector space over $\mathbb{C}$ with basis $\{v_j|1\leq j\leq n\}$, and define
$E_{ij}\in End(V)$ such that $E_{ij}v_k=\delta_{jk}v_i$.
The natural representation is the $U_{r,s}(\mathfrak{sl}_n)$-module $V$ with the action given by:
\begin{equation*}
\begin{aligned}
&e_j=E_{j,j+1}, ~~f_j=E_{j+1,j},\\
&\omega_j=rE_{jj}+sE_{j+1,j+1}+\sum_{k\neq j,j+1} E_{kk},\\
&\omega_j'=sE_{jj}+rE_{j+1,j+1}+\sum_{k\neq j,j+1} E_{kk},\\
\end{aligned}
\end{equation*}
where $1\leq j\leq n-1$.
%It can be verified that relations (R1)-(R7) hold, so $V$ is the so called natural representation.

It is clear that
\begin{equation}\label{eq2.2}
\omega_iv_j=r^{\langle\epsilon_i,\epsilon_j\rangle}s^{\langle\epsilon_{i+1},\epsilon_j\rangle}v_j,
\end{equation}
\begin{equation}\label{eq2.3}
\omega_i'v_j=r^{\langle\epsilon_{i+1},\epsilon_j\rangle}s^{\langle\epsilon_{i},\epsilon_j\rangle}v_j,
\end{equation}
 for all $i,j$, so $v_j$ has weight $\epsilon_j=\epsilon_1-(\alpha_1+\cdots+\alpha_{j-1})$. Therefore $V=\bigoplus_{j=1}^{n}V_{\epsilon_j}$ is the weight decomposition, and it is a simple module of $U_{r,s}(\mathfrak{sl}_n)$.

%\subsection{Tensor powers of natural module and R-matrix}

In \cite{BW3} Benkart and Witherspoon studied the tensor powers of the natural representation $V$ of $U_{r,s}(\mathfrak{sl}_n)$ and the associated R-matrix.
Let $R=R_{VV}$ be the $R$-matrix associated to $V$ given by the coproduct, and for $1\leq i<k$, let $R_i$ be the $U_{r,s}(\mathfrak{sl}_n)$ module isomorphism on $V^{\otimes k}$ defined by
\begin{equation*}
R_i(w_1\otimes w_2\otimes\cdots\otimes w_k)=w_1\otimes\cdots\otimes R(w_i\otimes w_{i+1})\otimes w_{i+2}\otimes\cdots\otimes w_k.
\end{equation*}

Since $R=R_{VV}$ satisfies the Yang-Baxter equation, the braid relations hold:
\begin{equation*}
R_iR_{i+1}R_i=R_{i+1}R_iR_{i+1}  ~~~~~~   for ~~~~1\leq i<k,
\end{equation*}
The construction also implies that for $|i-j|\geq 2$,
\begin{equation*}
R_iR_j=R_jR_i.
\end{equation*}
The following result was given in \cite{BW3}.
\begin{proposition}\label{prop2.3}
Whenever $s\neq -r$, the $U_{r,s}(\mathfrak{sl}_n)$-module $V\otimes V$ decomposes into a direct sum of two
simple submodules, $S_{r,s}^2(V)$(the (r,s)-symmetric tensors), and $\Lambda_{r,s}^2(V)$
(the (r,s)--antisymmetric tensors). These modules are defined as follows:
\begin{description}
  \item[(i)] $S_{r,s}^2(V)$ is the span of $\{v_i\otimes v_i|1\leq i\leq n\}\cup \{v_i\otimes v_j+sv_j\otimes v_i|1\leq i<j\leq n\}$.
  \item[(ii)] $\Lambda_{r,s}^2(V)$ is the span of $\{v_i\otimes v_j-rv_j\otimes v_i|1\leq i<j\leq n\}$.
\end{description}
\end{proposition}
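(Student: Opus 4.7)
The plan is to verify three things in turn: (a) both subspaces are $U_{r,s}(\mathfrak{sl}_n)$-submodules of $V\otimes V$, (b) each is simple, and (c) they intersect trivially and together span $V\otimes V$.

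For (a) I would check closure under the Hopf-algebra action directly from the coproducts $\Delta(e_i)=e_i\otimes 1+\omega_i\otimes e_i$ and $\Delta(f_i)=1\otimes f_i+f_i\otimes \omega_i'$ together with the explicit matrix action on $V$. Applying $e_k$ to $v_i\otimes v_j+sv_j\otimes v_i$ and to $v_i\otimes v_i$, then using $e_k v_\ell=\delta_{\ell,k+1}v_k$ and the diagonal scalars from $\omega_k v_\ell$ recorded in \eqref{eq2.2}, the output is again a linear combination of the listed spanning vectors. The computation for $f_k$ via $\omega_k'$ and \eqref{eq2.3} is analogous, and the same verification with the sign reversed handles $\Lambda^2_{r,s}(V)$.

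For (b) I would exhibit explicit highest-weight vectors in each submodule and invoke the classification of highest-weight modules from \cite{BW2,BW3}. The vector $v_1\otimes v_1$ has weight $2\epsilon_1$ and is killed by every $e_k$ since $e_k v_1=0$. For the antisymmetric piece, $v_1\otimes v_2-rv_2\otimes v_1$ has weight $\epsilon_1+\epsilon_2$ and is annihilated by $e_1$ because $\omega_1 v_1=r v_1$ gives $e_1(v_1\otimes v_2)=r v_1\otimes v_1$ while $e_1(v_2\otimes v_1)=v_1\otimes v_1$; for $k\geq 2$ both summands are already killed. I would then show that iterated application of lowering operators $f_k$ to each highest-weight vector reaches every listed spanning vector up to a nonzero scalar — a routine induction on the weight in which one tracks the monomials $r^a s^b$ produced by $\omega_k'$ in $\Delta(f_k)$. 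This identifies $S^2_{r,s}(V)$ with $V(2\epsilon_1)$ and $\Lambda^2_{r,s}(V)$ with $V(\epsilon_1+\epsilon_2)$, both simple.

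For (c) the decomposition reduces to dimension counting and a two-dimensional linear-independence check. One has $\dim S^2_{r,s}(V)=n+\binom{n}{2}$ and $\dim \Lambda^2_{r,s}(V)=\binom{n}{2}$, so the total is $n^2=\dim(V\otimes V)$. For each pair $i<j$ the vectors $v_i\otimes v_j+sv_j\otimes v_i$ and $v_i\otimes v_j-rv_j\otimes v_i$ are linearly independent precisely when $r+s\neq 0$, so together with the diagonal vectors $v_i\otimes v_i$ the union of the two proposed spanning sets is a basis of $V\otimes V$, which forces the direct-sum decomposition. The main obstacle I anticipate is the irreducibility step of (b): one must verify at each stage of the $f_k$-induction that the scalars $r^a s^b$ coming from $\omega_k'$ do not vanish, so that every target basis vector is genuinely reached; this is where the standing hypotheses $r,s\neq 0$, $r\neq s$, and $s\neq -r$ are all used.
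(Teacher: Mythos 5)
First, note that the paper does not actually prove this proposition; it is quoted verbatim from \cite{BW3}, so there is no internal argument to compare yours against. Judged on its own terms, your parts (a) and (c) are correct and complete: the closure computations under $\Delta(e_k)$ and $\Delta(f_k)$ work exactly as you describe (e.g.\ $e_k(v_k\otimes v_{k+1}+sv_{k+1}\otimes v_k)=(r+s)\,v_k\otimes v_k$ and $e_k(v_i\otimes v_{k+1}+sv_{k+1}\otimes v_i)=v_i\otimes v_k+sv_k\otimes v_i$ for $i<k$), and the $2\times 2$ determinant $-(r+s)$ in each weight space $\mathbb{C}v_i\otimes v_j\oplus\mathbb{C}v_j\otimes v_i$ gives the direct sum precisely when $s\neq -r$.

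The genuine gap is in (b). Reaching every spanning vector from the highest weight vector by iterated lowering operators shows only that $S^2_{r,s}(V)$ and $\Lambda^2_{r,s}(V)$ are \emph{cyclic highest weight} modules; it does not show they are simple, and it does not by itself ``identify'' them with $V(2\epsilon_1)$ and $V(\epsilon_1+\epsilon_2)$ --- a highest weight module of weight $\lambda$ is a priori only a quotient of the Verma module $M(\lambda)$ with $V(\lambda)$ as its unique simple quotient (Verma modules themselves are the standard counterexample to the inference you are making). You need one of two supplements. Either invoke complete reducibility of finite-dimensional modules (available from the quantum Casimir element of \cite{BW2, BW3} when $rs^{-1}$ is not a root of unity), so that an indecomposable highest weight module is simple; or argue upward: every weight space of $S^2_{r,s}(V)$ and of $\Lambda^2_{r,s}(V)$ is one-dimensional, so any nonzero submodule contains one of the listed spanning vectors, and one checks that the raising operators $e_k$ carry each such vector back to the highest weight vector through nonzero scalars. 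If you take the second route, note that the step from weight $\epsilon_i+\epsilon_{i+1}$ to weight $2\epsilon_i$ inside $S^2_{r,s}(V)$ produces the scalar $r+s$, so the hypothesis $s\neq -r$ is needed a second time, for the simplicity of $S^2_{r,s}(V)$ and not merely for the transversality in (c). Also, a small overstatement at the end: the nonvanishing of the scalars $r^as^b$ uses only $r,s\neq 0$; the hypothesis $r\neq s$ is needed for the algebra to be defined (relation (R4)) rather than anywhere in this particular induction.
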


Consequently one has that
\begin{equation}\label{minimal polynomial}
R_i^{2}=(1-rs^{-1})R_i+rs^{-1}Id
\end{equation}
for all $1\leq i<k$.

\begin{proposition}\label{prop2.4}\cite{BW3}
The minimum polynomial of $R$ on $V\otimes V$ is $(t-1)(t+rs^{-1})$ if $s\neq -r$.
\end{proposition}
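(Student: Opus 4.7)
The plan is to combine the quadratic identity \eqref{minimal polynomial} with the decomposition from Proposition \ref{prop2.3}. Rewriting \eqref{minimal polynomial} as
$$
(R_i - Id)(R_i + rs^{-1}\,Id) = 0,
$$
one sees immediately that $R$ is annihilated by $p(t)=(t-1)(t+rs^{-1})$. Since $s\neq -r$, the two roots $1$ and $-rs^{-1}$ are distinct, so the minimal polynomial of $R$ divides $p(t)$ and is one of $p(t)$, $t-1$, or $t+rs^{-1}$. It therefore suffices to rule out the latter two possibilities, i.e. to exhibit eigenvectors for both eigenvalues.

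For this I would invoke Schur's lemma. By Proposition \ref{prop2.3}, $V\otimes V = S^2_{r,s}(V)\oplus \Lambda^2_{r,s}(V)$ is a decomposition into two nonzero simple $U_{r,s}(\mathfrak{sl}_n)$-submodules, and $R$ is a $U_{r,s}(\mathfrak{sl}_n)$-module endomorphism, so it acts as some scalar $\lambda_S$ on $S^2_{r,s}(V)$ and some scalar $\lambda_{\Lambda}$ on $\Lambda^2_{r,s}(V)$, each lying in $\{1,-rs^{-1}\}$. To show $\lambda_S\neq \lambda_{\Lambda}$, I would perform one short explicit computation: apply $R=R_{VV}$, as given by the coproduct formula recalled in Section 2, to the generators
$v_1\otimes v_2 + sv_2\otimes v_1\in S^2_{r,s}(V)$ and $v_1\otimes v_2 - rv_2\otimes v_1\in \Lambda^2_{r,s}(V)$. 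These vectors reproduce themselves up to scalars; matching those scalars with the two admissible values from the preceding step fixes $\lambda_S=1$ and $\lambda_{\Lambda}=-rs^{-1}$.

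The principal obstacle is essentially bookkeeping: one must respect the normalization convention for $R_{VV}$ so that the eigenvalues on the two summands come out exactly to $1$ and $-rs^{-1}$ rather than to some constant multiple. However, the relation \eqref{minimal polynomial} already pins down this normalization, so once both eigenvalues are seen to occur they are forced to be $1$ and $-rs^{-1}$. Consequently both factors $(t-1)$ and $(t+rs^{-1})$ appear in the minimal polynomial of $R$, and the minimal polynomial equals $(t-1)(t+rs^{-1})$ as claimed.
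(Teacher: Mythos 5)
The paper itself offers no proof of this proposition --- it is quoted from \cite{BW3} --- so there is no internal argument to compare against; your outline is essentially the standard one and, modulo the caveats below, it is correct. The factorization of \eqref{minimal polynomial} as $(R-Id)(R+rs^{-1}Id)=0$, the observation that the roots are distinct when $s\neq -r$, and the use of Schur's lemma on the decomposition of Proposition \ref{prop2.3} are all sound; note only that to conclude $R$ acts by a scalar on \emph{each} summand you should also record that $S^2_{r,s}(V)\not\cong\Lambda^2_{r,s}(V)$ (immediate from their dimensions $n(n+1)/2$ and $n(n-1)/2$), since otherwise a module endomorphism could interchange the summands. Your final explicit check does work: with the formula \eqref{R-matrix} one finds $R(v_i\otimes v_j)=rv_j\otimes v_i$ and $R(v_j\otimes v_i)=s^{-1}v_i\otimes v_j+(1-rs^{-1})v_j\otimes v_i$ for $i<j$, whence eigenvalue $1$ on $v_i\otimes v_j+sv_j\otimes v_i$ and $-rs^{-1}$ on $v_i\otimes v_j-rv_j\otimes v_i$.

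The genuine weak point is logical ordering rather than computation. Within this paper the explicit matrix \eqref{R-matrix} is presented as a \emph{consequence} of Proposition \ref{prop2.4}, and the relation \eqref{minimal polynomial} is itself asserted to follow from Proposition \ref{prop2.3}, which already presupposes knowledge of the two eigenvalues. So the two ingredients you lean on --- the quadratic relation and the explicit action of $R$ on the generating vectors --- are, as the paper is written, downstream of the very statement you are proving. The coproduct formula for $R_{VV}$ is never actually displayed in Section 2; it is only referenced. To make your argument self-contained you must compute $R_{VV}$ directly from the quasitriangular (Drinfeld double) structure of $U_{r,s}(\mathfrak{sl}_n)$ as in \cite{BW2, BW3}, and then both \eqref{minimal polynomial} and the eigenvalue identification fall out of that single computation. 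Moreover, Schur's lemma plus the quadratic relation alone cannot rule out $\lambda_S=\lambda_\Lambda$ (a scalar $R$ satisfies \eqref{minimal polynomial} for either root), so the ``short explicit computation'' is not optional bookkeeping --- it is the substantive content of the proof and needs an independently derived formula for $R$ to stand on.
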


From Proposition \ref{prop2.4}, it follows that the action of $R$ on $V\otimes V$ is given as follows:
\begin{equation}\label{R-matrix}
R=\sum_{i=1}^nE_{ii}\otimes E_{ii}+r\sum_{i<j}E_{ji}\otimes E_{ij}+s^{-1}\sum_{i<j}E_{ij}\otimes E_{ji}+(1-rs^{-1})\sum_{i<j}E_{jj}\otimes E_{ii}.
\end{equation}

\section{Yang-Baxterization}

Starting from an $R$-matrix of the Yang-Baxter equation (YBE), Yang-Baxterization of $R$ recovers the
associated spectral parameter dependent $R$-matrix $R(z)$ satisfying the YBE:
$$
R_1(z)R_2(zw)R_1(w)=R_2(w)R_1(zw)R_2(z).
$$
When $R$ has two or three eigenvalues Ge, Wu and Xue \cite{GWX} gave an algorithm to compute its Yang-Baxterization. Suppose $R$ has two distinct eigenvalues $\lambda_1$, $\lambda_2$,
the Yang-Baxterization $R(z)$ can be computed by:
\begin{equation}\label{Yang-Baxterization}
R(z)=\lambda_2^{-1}R+z\lambda_1R^{-1}.
\end{equation}

Proposition \ref{prop2.4} shows that $R=R_{VV}$ has eigenvalues 1 and $-rs^{-1}$ on $V\otimes V$,
then by the Yang-Baxterization we have the following theorem. We remark that the resulting R-matrix with spectral parameter $z$ is an $(r,s)$-analogue of the $R$-matrix for the quantum affine algebra $U_q(\widehat{sl}_n)$.
\begin{theorem}
For the braid group representation $R=R_{VV}$, the R-matrix $R(z)$ is given by
\begin{align}\nonumber
R(z)&=(1-zrs^{-1})\sum_{i=1}^nE_{ii}\otimes E_{ii}+(1-z)(r\sum_{i>j}+s^{-1}\sum_{i<j})E_{ij}\otimes E_{ji}\\ \label{R(z)}
&+z(1-rs^{-1})\sum_{i<j}E_{ii}\otimes E_{jj}+(1-rs^{-1})\sum_{i>j}E_{ii}\otimes E_{jj}.
\end{align}
\end{theorem}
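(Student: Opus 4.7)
The plan is to apply the Ge--Wu--Xue formula (\ref{Yang-Baxterization}) directly, using the two eigenvalues of $R$ on $V\otimes V$ identified by Proposition \ref{prop2.4}. I take $\lambda_1=-rs^{-1}$ and $\lambda_2=1$, so (\ref{Yang-Baxterization}) reduces to
\[
R(z)=R-zrs^{-1}R^{-1}.
\]
(The opposite assignment of eigenvalues rescales $R(z)$ by the nonzero constant $-sr^{-1}$, which preserves the spectral Yang--Baxter equation, so either choice is legitimate.)

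The first step is to invert $R$ using the minimum polynomial. Rewriting (\ref{minimal polynomial}) as $R\bigl(R-(1-rs^{-1})Id\bigr)=rs^{-1}Id$ gives
\[
R^{-1}=sr^{-1}R+(1-sr^{-1})Id.
\]
Substituting into the Yang--Baxterization formula and collecting terms collapses the answer into the compact shape
\[
R(z)=(1-z)R+z(1-rs^{-1})Id,
\]
in which no inverses appear and no new case analysis is required.

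The second step is a routine expansion matching this to (\ref{R(z)}). Plug in the explicit form (\ref{R-matrix}) of $R$ and decompose the identity as
\[
Id=\sum_{i}E_{ii}\otimes E_{ii}+\sum_{i<j}E_{ii}\otimes E_{jj}+\sum_{i>j}E_{ii}\otimes E_{jj}.
\]
Collecting coefficients of the four natural types of tensors yields exactly the stated formula: the diagonal coefficient becomes $(1-z)+z(1-rs^{-1})=1-zrs^{-1}$; the off-diagonal permutation tensors $E_{ij}\otimes E_{ji}$ carry factor $(1-z)$ in both ranges $i<j$ and $i>j$; the $E_{ii}\otimes E_{jj}$ terms with $i>j$ collect $(1-z)(1-rs^{-1})+z(1-rs^{-1})=1-rs^{-1}$ from $R$ and the identity together, while those with $i<j$ receive only the identity contribution $z(1-rs^{-1})$.

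The only real obstacle is bookkeeping, since the identity contributes symmetrically to both off-diagonal ranges whereas the $E_{jj}\otimes E_{ii}$ piece of $R$ appears only for $i<j$, forcing a careful split of the two ranges before recombining. There is no conceptual difficulty beyond this: once the compact identity $R(z)=(1-z)R+z(1-rs^{-1})Id$ is established, the spectral Yang--Baxter equation for $R(z)$ is automatic from the general Ge--Wu--Xue construction \cite{GWX}, so no further verification is needed.
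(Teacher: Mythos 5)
Your proof is correct and follows essentially the same route as the paper: the same eigenvalue assignment $\lambda_1=-rs^{-1}$, $\lambda_2=1$ in the Ge--Wu--Xue formula, the same inversion of $R$ via the minimal polynomial, and the same collapse to $R(z)=(1-z)R+z(1-rs^{-1})\,Id$. The only difference is that you carry out the final coefficient bookkeeping explicitly, which the paper leaves to the reader.
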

\begin{proof} We already mentioned that $R=R_{VV}$ has two distinct eigenvalues $\lambda_1=-rs^{-1}$ and $\lambda_2=1$. Ge-Wu-Xue's Yang-Baxterization implies that the resulting R-matrix with spectral parameter is then given by:
\begin{equation*}
R(z)=R-zrs^{-1}R^{-1}
\end{equation*}
On the other hand, Eq. (\ref{minimal polynomial}) implies that
\begin{equation*}\label{Inverse}
R^{-1}=r^{-1}sR+(1-r^{-1}s) I.
\end{equation*}
Thus $R(z)=(1-z)R-z(rs^{-1}-1)I$, which is exactly Eq. (\ref{R(z)}). \end{proof}
\begin{remark} Clearly $R(0)=R$. Moreover, when $r=q$ and $s=q^{-1}$, the R-matrix
$R(z)$ turns into
\begin{equation*}
R_q(z)=(1-zq^2)\sum_{i=1}^nE_{ii}\otimes E_{ii}+(1-z)q\sum_{i \neq j}E_{ij}\otimes E_{ji}+(1-q^2)(\sum_{i>j}+z\sum_{i<j})E_{ii}\otimes E_{jj},
\end{equation*}
which is exactly the Jimbo R-matrix for the quantum affine algebra $U_{q}(\mathfrak{sl}_n)$.
In this regard we can view $R(z)$ as an $(r,s)$-analogue of the R-matrix $R_q(z)$ of the quantum affine algebra $U_{q}(\widehat{\mathfrak{sl}}_n)$ \cite{HRZ}.
\end{remark}

\section{$(r,s)$-wedge modules of $U_{r,s}(sl_n)$}

In \cite{BW3} Benkart and Witherspoon studied the $(r,s)$-symmetric tensor $S^2_{r,s}(V)$ (see Proposition \ref{prop2.3}) using the Hopf algebra structure. Here we give an alternative description of the symmetric tensors
using the fusion procedure as well as the anti-symmetric tensors.

\begin{proposition} The subspace
$S_{r,s}^2(V)$ is equal to the image of $R(rs^{-1})$ or $Ker\, R(r^{-1}s)$  on $V\otimes V$,
and $\Lambda_{r, s}^2(V)$ is equal to the kernel of $R(rs^{-1})$ or $Im\, R(r^{-1}s)$  on $V\otimes V$.
\end{proposition}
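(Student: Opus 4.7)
The plan is to substitute $z = rs^{-1}$ and $z = r^{-1}s$ into the compact formula $R(z) = (1-z)R - z(rs^{-1}-1)I$ derived in the proof of the preceding theorem, and then read off the kernels and images from the spectral decomposition of $R$ supplied by Propositions~\ref{prop2.3} and~\ref{prop2.4}. A direct rearrangement gives $R(rs^{-1}) = (1-rs^{-1})(R + rs^{-1}I)$ and $R(r^{-1}s) = (1-r^{-1}s)(R - I)$, and both leading scalar factors are nonzero because of the standing assumption $r \neq s$.

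Next I would identify the eigenspaces of $R$ on $V \otimes V$. Proposition~\ref{prop2.4} gives the minimal polynomial $(t-1)(t+rs^{-1})$, so $R$ is diagonalizable with eigenvalues $1$ and $-rs^{-1}$; Proposition~\ref{prop2.3} exhibits the decomposition $V \otimes V = S_{r,s}^2(V) \oplus \Lambda_{r,s}^2(V)$ into two simple $U_{r,s}(\mathfrak{sl}_n)$-submodules, on each of which $R$, being a module map, must act as a scalar. A one-line check on typical generators using the explicit formula~\eqref{R-matrix}---for instance $R(v_i \otimes v_j + s\, v_j \otimes v_i) = v_i \otimes v_j + s\, v_j \otimes v_i$ and $R(v_i \otimes v_j - r\, v_j \otimes v_i) = -rs^{-1}(v_i \otimes v_j - r\, v_j \otimes v_i)$ for $i<j$---pins down $S_{r,s}^2(V)$ as the $1$-eigenspace and $\Lambda_{r,s}^2(V)$ as the $(-rs^{-1})$-eigenspace.

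Combining these, $R(rs^{-1})$ acts on $S_{r,s}^2(V)$ as the scalar $(1-rs^{-1})(1+rs^{-1})$ and vanishes on $\Lambda_{r,s}^2(V)$, while $R(r^{-1}s)$ vanishes on $S_{r,s}^2(V)$ and acts on $\Lambda_{r,s}^2(V)$ as $-(1-r^{-1}s)(1+rs^{-1})$. Under the hypothesis $s \neq -r$ carried over from Proposition~\ref{prop2.3}, both nonzero scalars are genuinely invertible, so $R(rs^{-1})$ is an isomorphism of $S_{r,s}^2(V)$ onto itself and $R(r^{-1}s)$ is an isomorphism of $\Lambda_{r,s}^2(V)$ onto itself. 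The direct sum decomposition then yields all four identifications at once. There is no real obstacle; the only mild bookkeeping step is to match eigenvalues to the correct summands with the correct signs, which is dispatched by the one-line verifications on the generators above.
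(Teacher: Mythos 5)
Your proof is correct and follows essentially the same route as the paper's, which likewise rests on the spectral decomposition of $R$ on $V\otimes V$ and direct checks on the spanning vectors of $S_{r,s}^2(V)$ and $\Lambda_{r,s}^2(V)$. Your explicit factorizations $R(rs^{-1})=(1-rs^{-1})(R+rs^{-1}I)$ and $R(r^{-1}s)=(1-r^{-1}s)(R-I)$ merely package the same computation a bit more systematically, yielding all four identifications at once.
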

\begin{proof} This is a simple consequence of the spectral decomposition
of the $R$-matrix $R(z)$. In fact applying $R(rs^{-1})$ to $v_i\otimes v_j$ proves that
$S^2_{r, s}(V)=Im R(rs^{-1})$. Then one verifies that $R(rs^{-1})(v_i\otimes v_j-rv_j\otimes v_i)=0$ for $i<j$,
i.e., $KerR(rs^{-1})\supseteq\Lambda_{r, s}^2(V)$. The equality follows due to
$S^2_{r, s}\cap \Lambda^2_{r,s}=\{0\}$. The other relations are checked similarly.
\end{proof}

\begin{remark}
The above proposition is the analogue of the one-parameter case (see \cite{JMO}).
In fact, when  $s^{-1}=q=r$, the (r,s)-symmetric tensor $S^2_{r,s}(V)$ of $U_{r,s}(\mathfrak{sl}_n)$
reduces to the $q$-symmetric tensor $W$ of $U_q(\mathfrak{sl}_n)$ (see \cite{JMO}).
\end{remark}

Now we can study the general wedge products of the two-parameter case using the fusion procedure
as in \cite{JMO}. The following theorem completely determines $(r,s)$-wedge modules for $U_{r,s}(\mathfrak{sl}_n)$.

\begin{theorem} On the tensor product of the natural representation of $U_{r,s}(\mathfrak{sl}_n)$ we have
\begin{equation}
V^{\otimes k}/\sum_{i=0}^{k-2}V^{\otimes i}\otimes S^2_{r,s}(V) \otimes V^{\otimes (k-i-2)}\cong V(\overline{\omega_k})
\end{equation}
\end{theorem}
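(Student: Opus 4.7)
The plan is to adapt the fusion procedure of Jimbo--Miwa--Okado \cite{JMO} to the two-parameter setting, using the spectral $R$-matrix $R(z)$ of Section~3 together with the identification $S^2_{r,s}(V)=\operatorname{Im}R(rs^{-1})=\operatorname{Ker}R(r^{-1}s)$ from the preceding proposition. The $k=2$ case is immediate from Proposition~\ref{prop2.3}: $V^{\otimes 2}/S^2_{r,s}(V)\cong\Lambda^2_{r,s}(V)$ is simple with highest weight vector $v_1\otimes v_2-rv_2\otimes v_1$ of weight $\overline{\omega}_2$.

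For general $k$, I would proceed in three stages. \emph{Standardization.} The generators $v_a\otimes v_a$ and $v_a\otimes v_b+sv_b\otimes v_a$ ($a<b$) of $S^2_{r,s}(V)$ give the adjacent-position congruences $v_a\otimes v_a\equiv 0$ and $v_a\otimes v_b\equiv -s\,v_b\otimes v_a$ modulo the quotiented subspace; iterating these nearest-neighbor moves reduces any pure tensor $v_{i_1}\otimes\cdots\otimes v_{i_k}$ either to zero (when some entries coincide) or to a scalar multiple of a standardly ordered tensor with strictly increasing indices, yielding a spanning set of at most $\binom{n}{k}$ standard wedges. \emph{Highest weight vector.} The class $[v_1\otimes\cdots\otimes v_k]$ has weight $\widehat{\overline{\omega}_k}$ by \eqref{eq2.2}--\eqref{eq2.3} applied through the iterated coproduct on $\omega_i,\omega_i'$, and is annihilated by every $e_j$: for $j<k$ the unique surviving summand of $\Delta^{k-1}(e_j)\cdot(v_1\otimes\cdots\otimes v_k)$ places $v_j$ in adjacent positions $j,j+1$, so falls in $V^{\otimes(j-1)}\otimes S^2_{r,s}(V)\otimes V^{\otimes(k-j-1)}$; for $j\geq k$ the action already vanishes on $V^{\otimes k}$. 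Hence the quotient is a highest weight module of weight $\overline{\omega}_k$. \emph{Linear independence and simplicity.} I would construct a fusion $(r,s)$-antisymmetrizer $A_k\in\operatorname{End}(V^{\otimes k})$ as a product of operators $R_i(z)$ at spectral values arranged along a reduced expression for the longest element of $S_k$, so that each adjacent-site factor becomes a symmetrizer onto $S^2_{r,s}(V)$; the YBE for $R(z)$ together with the quadratic relation \eqref{minimal polynomial} yield $A_k\cdot R_i(rs^{-1})=0$ for all $i$, so $A_k$ factors through the quotient, while a direct expansion gives $A_k(v_1\otimes\cdots\otimes v_k)\neq 0$. Consequently all $\binom{n}{k}$ standard wedges are linearly independent in the quotient, which then has one-dimensional weight spaces at each weight $\epsilon_{i_1}+\cdots+\epsilon_{i_k}$; a short check shows $[v_1\otimes\cdots\otimes v_k]$ is the only highest weight vector up to scalar (for any other $\{i_1<\cdots<i_k\}$, take the smallest $l$ with $i_l>l$; then $e_{i_l-1}$ sends $[v_{i_1}\otimes\cdots\otimes v_{i_k}]$ to a nonzero standard wedge since $i_l-1$ is absent from the index set). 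Hence the quotient is simple and equals $V(\overline{\omega}_k)$.

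The main obstacle is the construction in stage three: choosing the spectral values in the fusion product defining $A_k$, verifying via the YBE and \eqref{minimal polynomial} that $A_k$ annihilates each adjacent-position copy of $S^2_{r,s}(V)$, and establishing $A_k(v_1\otimes\cdots\otimes v_k)\neq 0$. Structurally this parallels the one-parameter $q$-antisymmetrizer computation in \cite{JMO}, but the $(r,s)$-deformation requires careful bookkeeping of the asymmetric scalars that appear when each $R_i(z)$ is expanded via \eqref{R(z)} as a permutation of adjacent tensor factors plus a diagonal correction; once these coefficient identities are in place, the isomorphism asserted in the theorem follows.
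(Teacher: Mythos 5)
Your first two stages (standardization down to at most $\binom{n}{k}$ increasing wedges, and the verification that $[v_1\otimes\cdots\otimes v_k]$ is a singular vector of weight $\overline{\omega}_k$) coincide with the paper's argument. Where you genuinely diverge is in how the lower bound on the dimension is obtained. The paper builds no antisymmetrizer: having produced the highest weight vector, it asserts that $V(\overline{\omega}_k)$ sits inside the quotient and then closes the dimension count ``by taking special values of $r$ and $s$'' --- a specialization to the one-parameter or classical case. You instead stay inside the two-parameter theory and propose a fusion operator $A_k$, a product of the $R_i(z)$ of \eqref{R(z)} at staggered spectral points, annihilating every adjacent copy of $S^2_{r,s}(V)$ and nonzero on $v_1\otimes\cdots\otimes v_k$. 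That is more work but buys more: it actually certifies that $v_1\wedge\cdots\wedge v_k\neq 0$ in the quotient, a point the paper's proof does not verify (annihilation by the $e_i$ and the weight computation do not by themselves rule out that the vector dies in the quotient), and it is the construction that the introduction advertises as the fusion procedure. Your uniqueness-of-singular-vector check (via $e_{i_l-1}$ for the smallest $l$ with $i_l>l$), combined with complete reducibility, then gives simplicity directly, whereas the paper leans entirely on the dimension coincidence.

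Two soft spots remain in your third stage. First, $A_k(v_1\otimes\cdots\otimes v_k)\neq 0$ alone does not yield linear independence of all $\binom{n}{k}$ standard wedges; you must either run the same leading-term expansion on every strictly increasing tensor $v_{i_1}\otimes\cdots\otimes v_{i_k}$ (the images then lie in distinct weight spaces, so nonvanishing suffices), or take the representation-theoretic shortcut: once $v_1\wedge\cdots\wedge v_k\neq 0$, the cyclic submodule it generates is a highest weight module surjecting onto $V(\overline{\omega}_k)$, and $\dim V(\overline{\omega}_k)=\binom{n}{k}$ together with your spanning bound forces equality. Second, that dimension formula for the simple module is an external input in either approach --- it is exactly where the paper's ``special values'' step does its work --- so it should be cited explicitly rather than hidden inside the phrase ``equals $V(\overline{\omega}_k)$.'' Neither issue is fatal, and the deferred computation of $A_k$ is a genuine but standard adaptation of the $q$-antisymmetrizer of \cite{JMO}; modulo carrying it out, your plan is sound and in fact more self-contained than the printed proof.
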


\begin{proof}
We denote the image of $v_{i_1}\otimes v_{i_2}...\otimes v_{i_k}\in V^{\otimes k}$ in $V^{\otimes k}/\sum_{i=0}^{k-2}V^{\otimes i}\otimes S^2_{r,s}(V) \otimes V^{\otimes (k-i-2)}$ by $v_{i_1}\wedge v_{i_2}\wedge \cdots\wedge v_{i_k}$.

First we show that the vector $v_{1}\wedge v_{2}\wedge \cdots\wedge v_{k}$ is a highest vector with highest weight $\overline{\omega_k}$ in the $U_{r,s}(sl_n)$-module.

It follows from Eqs. (\ref{eq2.2})--(\ref{eq2.3}) and the Hopf algebra structure of $U_{r,s}(\mathfrak{sl}_n)$
that
\begin{align*}
\omega_i\cdot v_1\wedge v_2\wedge\cdots\wedge v_k&=r^{<\varepsilon_i,\varepsilon_1+\varepsilon_2+\cdots+\varepsilon_k>}s^{<\varepsilon_{i+1},
\varepsilon_1+\varepsilon_2+\cdots+\varepsilon_k>}v_1\wedge v_2\wedge\cdots\wedge v_k, \\
\omega'_i\cdot v_1\wedge v_2\wedge\cdots\wedge v_k &= r^{<\varepsilon_{i+1},\varepsilon_1+\varepsilon_2+\cdots+\varepsilon_k>}s^{<\varepsilon_{i},\varepsilon_1+
\varepsilon_2+\cdots+\varepsilon_k>}v_1\wedge v_2\wedge\cdots\wedge v_k.
\end{align*}
Hence $v_1\wedge v_2\wedge\cdots\wedge v_k$ has weight $\overline{\omega_k}=\varepsilon_1+\varepsilon_2+\cdots+\varepsilon_k$.

Furthermore the action of $e_i$ on $V^{\otimes k}$ can be computed as follows. For example
\begin{align*}
&e_i\cdot v_{1}\otimes v_{2}\otimes\cdots\otimes v_{k}=\sum_{j=1}^k \omega_i v_1\otimes\cdots\otimes \omega_i v_{j-1}\otimes e_i v_j
\otimes v_{j+1}\otimes\cdots\otimes v_k\\
&=\sum_{j=1}^{k}r^{<\varepsilon_i,\varepsilon_1+\cdots+
\varepsilon_{j-1}>}s^{<\varepsilon_{i+1},\varepsilon_1+\cdots+\varepsilon_{j-1}>}
\delta_{j,i+1}v_1\cdots\otimes  v_{j-1}\otimes v_{i}
\otimes v_{j+1}\cdots\otimes v_k\\
&=rv_1\otimes\cdots\otimes  v_{i}\otimes v_{i}
\otimes v_{i+2}\otimes v_{i+3}\otimes\cdots\otimes v_k\quad \mbox{or} \quad 0\\
&\in \sum_{j=0}^{k-2}V^{\otimes j}\otimes S^2_{r,s}(V) \otimes V^{\otimes (k-j-2)}.
\end{align*}
Thus $e_i\cdot v_1\wedge v_2\wedge\cdots\wedge v_k=0$ in the $U_{r,s}(\mathfrak{sl}_n)$ module
$V^{\otimes k}/\sum_{i=0}^{k-2}V^{\otimes i}\otimes S^2_{r,s}(V) \otimes V^{\otimes (k-i-2)}$.

From the above discussion, we have that the fundamental representation $V(\overline{\omega}_k)$ is isomorphic to
a submodule of $V^{\otimes k}/\sum_{i=0}^{k-2}V^{\otimes i}\otimes S^2_{r,s}(V) \otimes V^{\otimes (k-i-2)}$.
By taking special values of $r$ and $s$, one sees that the two modules have the same dimension, so they are
isomorphic. \end{proof}

\medskip

\centerline{\bf Acknowledgments}
NJ gratefully acknowledges the partial support of
Simons Foundation grant 198129 and NSFC grant 11271138 during this work.

\bibliographystyle{amsalpha}

\begin{thebibliography}{99}
\bibitem{BW1} G. Benkart and S. Witherspoon, {\em A Hopf structure for down-up algebras}, Math. Z. 238 (2001), 523-553.
\bibitem{BW2} G. Benkart and S. Witherspoon, {\em Two-parameter quantum groups and Drinfeld doubles}, Algebr. Represent. Theory 7 (2004), 261--286.
\bibitem{BW3} G. Benkart and S. Witherspoon, {\em Representations of two-parameter quantum groups and Schur-Weyl duality}, In: Hopf algebras, Lecture Notes in Pure and Appl. Math., 237, New York:Dekker, 2004, pp. 62--92.
\bibitem{D} V.~G. Drinfeld, {\em Quantum groups}, Proc. ICM, Vol. 1, 2 (Berkeley, Calif., 1986), pp.798--820, Amer. Math. Soc., Providence, RI, 1987.
\bibitem{DPW} J. Du, B. Parshall and J. Wang, {\em Two-parameter quantum linear groups and the hyperbolic invariance of q-Schur algebras}, J. London Math. Soc. (2)44 (1991), 420--436.
\bibitem{DD} R. Dipper and S. Donkin, {\em Quantum $GL_n$}, Proc. London Math. Soc. (3)63 (1991), no. 1, 165--211.
\bibitem{Do} V. Dobrev, {\em Duality for the matrix quantum group
$GL_{p,q}(2, C)$}, J. Math. Phys. {33} (1992) 3419--3430.
\bibitem{GWX} M.-L. Ge, Y.-S. Wu and K. Xue, {\em Explicit Triogonometric Yang-Baxterization}, Inter. J. of Modern Phys. A 6(21) (1991), 3735--3779.
\bibitem{HRZ} N. Hu, M. Rosso and H. Zhang, {\em Two-parameter quantum affine algebra $U_{r,s}(\widehat{\mathfrak{sl}}_n)$, Drinfeld
realization and quantum affine Lyndon basis}, Commun. Math. Phys. 278 (2008), 453-486.
\bibitem{Jb} M. Jimbo, {\em A q-difference analogue of $U(\mathfrak{g})$ and the Yang-Baxter equation}, Lett. Math. Phys. 10 (1985), 63--69.
\bibitem{J} N. Jing, {\em Quantum groups with two parameters}, Contemp. Math. 134 (1992), 129--138.
\bibitem{JMO} N. Jing, K. Misra and M. Okado, {\em q-wedge modules for quantized enveloping algebras of classical type}, J. Algebra 230 (2000), 518--539.
\bibitem{KRS} P. P. Kulish, N. Yu. Reshetikhin, E. K. Kulish,
{\it Yang-Baxter equations and representation theory. I},
Lett. Math. Phys. 5 (1981), 393--403.
\bibitem{KMN} S.-J. Kang, M. Kashiwara, K.~C. Misra, T. Miwa, T. Nakashima, and A. Nakayashiki,
{\em Perfect crystals of quantum affine Lie algebras}, Duke Math. J. 68 (1992), 499--607.
\bibitem{M} Y. Manin, {\em Topics in noncommutative geometry}, Princeton Univ. Press, 1991.
\bibitem{Rosso} M. Rosso, {\em Finite dimensional representations of the quantum analog of the enveloping algebra of a complex simple Lie algebra},
Commun. Math. Phys. 117 (1988), 581--593.
\bibitem{R} N. Reshetikhin, { Multiparameter quantum groups and twisted quasitriangular Hopf algebras},
Lett. Math. Phys. {20} (1990), 331--335.
\bibitem{T} M. Takeuchi, {\em A two-parameter quantization of $GL(n)$}, Proc. Japan. Acad. 66 Ser. A (1990), 112--114.
\end{thebibliography}

\end{document}